\newtheorem{lemma}{Lemma}[section]
\newtheorem{remark}[lemma]{Remark}
\newtheorem{proposition}[lemma]{Proposition}
\newtheorem{corollary}[lemma]{Corollary}
\theoremstyle{remark}
\DeclareMathOperator{\sign}{sign}
\newcommand{\real}{\mathbb{R}}
\newcommand{\poly}{\mathbb{P}}
\newcommand{\dualp}[1]{\left\langle #1 \right\rangle} 
\newcommand{\param}{\mathcal{P}}
\newcommand{\indexset}{\mathcal{I}}
\newcommand{\interpol}{\mathcal{I}}
\newcommand{\transforms}{\mathcal{T}}
\newcommand{\banach}{\mathcal{B}}
\begin{document}

\title{Transformed Snapshot Interpolation with High Resolution Transforms}
\author{G. Welper\footnote{Department of Mathematics, University of Central Florida, Orlando, FL 32816, USA, email \href{mailto:gerrit.welper@ucf.edu}{\texttt{gerrit.welper@ucf.edu}}. \newline 
This material is based upon work supported by the National Science Foundation under Grant No. 1912703.}}

\date{}
\maketitle

\begin{abstract}
  In the last few years, several methods have been developed to deal with jump singularities in parametric or stochastic hyperbolic PDEs. They typically use some alignment of the jump-sets in physical space before performing well established reduced order modelling techniques such as reduced basis methods, POD or simply interpolation. In the current literature, the transforms are typically of low resolution in space, mostly low order polynomials, Fourier modes or constant shifts. In this paper, we discuss higher resolution transforms in one of the recent methods, the transformed snapshot interpolation (TSI). We introduce a new discretization of the transforms with an appropriate behaviour near singularities and consider their numerical computation via an optimization procedure.

\end{abstract}

\smallskip
\noindent \textbf{Keywords:} Parametric PDEs, shocks, transformations, interpolation, convergence rates, optimization

\smallskip
\noindent \textbf{AMS subject classifications:} 41A46, 41A25, 35L67, 65M12

\section{Introduction}

On important ingredient in reduced order modeling and PDEs with random coefficients is the numerical approximation of parametric functions $u(x, \mu)$ with physical variables $x \in \Omega \subset \real^d$ and deterministic or random parameters $\mu$ in some parameter space $\param \subset \real^n$. The literature provides a rich toolbox for this kind of approximation, see e.g. \cite{CohenDeVore2015,BennerGugercinWillcox2015,QuarteroniManzoniNegri2015,HesthavenRozzaStamm2015} for an overview. However, if $u(x, \mu)$ has parameter dependent jumps or kinks, the vast majority of the available methods suffer from low convergence rates \cite{ConstantineIaccarino2012,OhlbergerRave2016,Welper2017}. As a result, problems from parametric hyperbolic PDEs, elliptic PDEs with jumping diffusion (in parameter dependent locations) and parametric level-set methods still pose severe challenges. 

Besides a large body of literature on stability, offline/online decompositions and error estimators for hyperbolic or singularly perturbed problems \cite{ChenGottliebHesthaven2005,HaasdonkOhlberger2008,HaasdonkOhlberger2008a,NguyenRozzaPatera2009,PetterssonAbbasbIaccarinoEtAl2010,PulchXiu2012,TryoenMaitreErn2012,MishraSchwab2012,DespresPoeetteLucor2013,YanoPateraUrban2014,PetterssonIaccarinoNordstroem2014,PacciariniRozza2014,DahmenPleskenWelper2014,Dahmen2015, AbgrallAmsallem2015,JinXiuZhu2016,BrunkenSmetanaUrban2018}, in the last few years, several groups have addressed the poor regularity of $u(x, \mu)$ and developed several methods with drastically improved convergence behaviour. While some use localization strategies \cite{ConstantineIaccarino2012,Peherstorfer2018}, currently the majority relies on additional transforms, shifts or similar tools in order to alleviate the issue. All these approaches have in common that they try to align the discontinuities or sharp gradients in physical space before applying classical reduced order modelling techniques as interpolation, POD or reduced bases. This alignment drastically improves the smoothness \cite{LiLiuShu2018}, Kolmogorov $n$-width, or decay rate of the singular values, respectively, which results in considerable efficiency gains in the presence of jumps. Some approaches use characteristics \cite{TaddeiPerottoQuarteroni2015}, Lagrangian formulations \cite{MojganiBalajewicz2017}, extra equations or conditions \cite{GerbeauLombardi2012,OhlbergerRave2013,GerbeauLombardi2014} or jump tracking \cite{SchulzeReissMehrmann2018}. Another approach is to use displacement interpolation or  optimal transport \cite{IolloLombardi2014,RimMandli2018,RimMandli2018a}, which naturally builds on transport along the physical variables. One of the more common recent approaches uses a composition of a snapshot $u(x - s, \mu)$ with a shift $s$ or alternatively a transform $u(X(x),\mu)$ in order to align the jump locations in parameter, where $s$ or $X$ usually depend on the parameter and may or may not depend on the location $x$. These shifted or transformed snapshots are then used for interpolation, reduced bases or further compressed by POD, see e.g. \cite{Welper2017,ReissSchulzeSesterhenn2015,CagniartMadayStamm2019,CagniartCrisovanMadayEtAl2017,NairBalajewicz2017,Welper2017a}. Alternatively,  in \cite{RimMoeLeVeque2017}, instead of a function composition, the authors shift the coefficients of a discretization.

For the latter class of methods, one needs numerical procedures to find the shifts $s$ or transforms $X$. Since the shifts or transforms generally depend on $x$ and $\mu$, one first chooses a suitable discretization. Then, one minimizes an error formula among the chosen discrete representation during the ``offline phase'', either between individual (transformed) snapshots or the full reconstruction error of the method. It seems that in all of the current literature the discretizations are of low order in physical space either constants, low order polynomials or Fourier modes. Clearly, these limitations on the transform's resolutions pose severe limitations on their applicability to many practical problems. As we will see in Section \ref{sec:transform-example}, already quite simple scenarios require transforms with high resolution in physical space.  

As a first part of this paper, we discuss possible discretizations of high resolution transforms, which in the most general case are functions $X(x,\mu)$ depending on the physical variables $x$ and an arbitrary parameter $\mu$. As we will discuss in Section \ref{sec:transform-example}, these functions share some major properties with $u$ itself: They may require high resolution in the spacial variables and can be non-smooth in parameter. However, we cannot use the same discretization strategy as for $u$ without introducing a second layer of transforms. Instead, we will see that these transforms are naturally given as solutions of ODEs (generally different from characteristic ODEs) and that variants of implicit solvers thereof provide efficient ways for their discretization.

In principle, one can then use the same optimization strategies as for low resolution transforms, but as we will see in some experiments below, this results in poor outcomes. Typically, naive implementations of gradient descent optimizers stall after a few steps and the resulting transform is visually indistinguishable from the initial transform. In order to overcome this problem, in Section \ref{sec:optimize}, we discuss that the transforms $X$ are functions of $x$ and must be considered in appropriate function spaces. This entails that the Fr\'{e}chet derivative of the objective function to optimize the transform is only a functional in a dual space and must be appropriately ``lifted'' by a Riesz map to obtain a well defined gradient descent method. In this paper, we report on two lifting methods, one by inverting a Laplacian and one by using a multi-level frame.

The paper is organized as follows: To keep it self contained, in Section \ref{sec:tsi-review}, we briefly review the transformed snapshot interpolation (TSI), which is one of the recently introduced transformed based reduced order modeling techniques for problems with jumps. Then, in Section \eqref{sec:transform-example}, we consider an example problem to better understand the requirements for high resolution transforms. In Section \ref{sec:ode} we discuss the discretization of the transforms and in Section \ref{sec:stability} consider their stability. In Section \ref{sec:optimize}, we introduce suitable optimizers to practically find the transforms and finally in Section \ref{sec:experiments} we provide some numerical experiments.

\section{Transformed Snapshot Interpolation}
\label{sec:tsi-review}

In order to keep the paper self contained, let us first briefly review the \emph{transformed snapshot interpolation} (TSI) introduced in \cite{Welper2017}. It approximates a parametric function $u(x, \mu)$ with physical variables $x \in \Omega \subset \real^d$ and deterministic or random parameters $\mu \in \param \subset \real^n$ by 
\begin{equation}
  u(x, \mu) \approx u_m(x, \mu) := u_m(x, \mu; X) := \sum_{\eta \in \param_m} \ell_\eta(\mu) u(X(\eta; \mu, x), \eta),
  \label{eq:tsi}
\end{equation}
where $\ell_\eta(\mu)$ are Lagrange basis polynomials with respect to given interpolation points $\eta \in \param_m \subset \param$. Similar to reduced basis methods and PODs, this interpolation only requires us to know snapshots $u(\cdot, \eta)$ at finitely many interpolation points in parameter. In addition we need the transforms $\eta \to X(\eta; \mu, x) \subset \Omega$ which can be regarded as curves in physical space, together with an initial value $X(\mu; \mu, x) = x$. We often write $X(\eta) = X(\eta;\mu, x)$ if $\mu$ and $x$ are understood from context.

As a notational convention, both $\mu$ and $\eta$ refer to parameters, where $\mu$ usually denotes a target parameter where we want to approximate $u(\cdot, \mu)$ and $\eta$ is either a continuous auxiliary parameter in $\param$ as in the curve $\eta \to X(\eta)$ or a discrete ``source'' parameter in $\param_m$ at locations where we know the snapshots, e.g. as in the definition of the TSI \eqref{eq:tsi}. The distinction between continuous or discrete should always be clear from context.

Let us now come back to the TSI \eqref{eq:tsi}. Choosing the trivial transform $X(\eta; \mu, x) = x$, the formula reduces to a standard polynomial interpolation. However, this simple approach does not achieve good convergence rates in case $u(x, \mu)$ has jumps or kinks in parameter dependent locations, because these entail that $u$ is not even differentiable in the interpolation direction. The extra transform $X(\eta; \mu, x)$ is used to align these jumps so that they become ``invisible'' to the interpolation. In other words, for a target parameter $\mu$ and interpolation point $\eta$, the transform $X(\eta; \mu, x)$ ensures that the \emph{transformed snapshots} $(x, \eta) \to u(X(\eta; \mu, x), \eta)$ have jumps in the same locations as the correct solution $u(\cdot, \mu)$, independent of $\eta$. In particular, for all $x$, which are not in the jump-set of $u(\cdot, \mu)$, the transformed snapshots are smooth in $\eta$. The TSI \eqref{eq:tsi} is merely a polynomial interpolation of these transformed snapshots in $\eta$, and thus provides high order accuracy.

Note that in the TSI \eqref{eq:tsi} the transforms $X(\eta; \mu, x)$ are only evaluated at finitely many interpolation points $\eta \in \param_m$ and the continuous variables $x \in \Omega$ and $\mu \in \param$. If the transforms are smooth in $\mu$, they can be efficiently discretized in $\mu$ by interpolation
\begin{equation}
  X(\eta; \mu, x) = \sum_{\gamma \in \param_m} \ell_\gamma(\mu) X(\eta; \gamma, x),
  \label{eq:transform-interpol}
\end{equation}
called \emph{low resolution transforms} in the following. This was the original choice in \cite{Welper2017} together with low order polynomials for the remaining finitely many functions $x \to X(\eta; \gamma, x)$, $\eta, \gamma \in \param_m$ with zero normal flux as a numerical substitute for the condition $X(\eta; \mu, x) \in \Omega$. As we will see e.g. in Section \ref{sec:transform-example}, neither the smoothness in $\mu$ nor the choice of low order polynomials in $x$ is warranted in some fairly simple examples. Therefore, we will discuss alternative representations of the transforms in Section \ref{sec:ode}.

Finally, similar to neural network training or greedy algorithms for reduced basis methods, the remaining transforms $X(\eta; \gamma, x)$, $\eta, \gamma \in \param_m$ are trained by minimizing the worst case error
\begin{equation}
  \sup_{\mu \in \param_T}\|u(x, \mu) - u_m(x, \mu; X)\|_{L_1(\Omega)},
  \label{eq:opt}
\end{equation}
Since we do not know $u(\cdot, \mu)$ for all $\mu \in \param$, we cannot calculate the full error and confine ourselves to a training sample $\param_T \subset \param$. Of course other $L_p$-norms in parameter are also possible. The $L_1$-norm for the physical variables is natural for hyperbolic problems and ensures that the objective function of this optimization problem is differentiable almost everywhere, given that $u$ is of bounded variation, see \cite{Welper2017} for more details.

\section{An Example Problem}
\label{sec:transform-example}

In this section, we first consider a simple example problem that highlights some typical difficulties for the low resolution transforms \eqref{eq:transform-interpol}. Namely, let us consider the function
\begin{equation}
  u(x, \mu) = \left\{ \begin{array}{rl} 
    1 & x \le -(1 - \mu) \\
    -1 & x \ge \phantom{-(} 1 - \mu \phantom{)} \\
    0 & \text{else}
  \end{array}\right., 
  \label{eq:example-func-1}
\end{equation}
for $\mu < 1$ and 
\begin{equation}
  u(x, \mu) = \left\{ \begin{array}{rl} 
    1 & x \le 0 \\
    -1 & x > 0
  \end{array}\right., 
  \label{eq:example-func-2}
\end{equation}
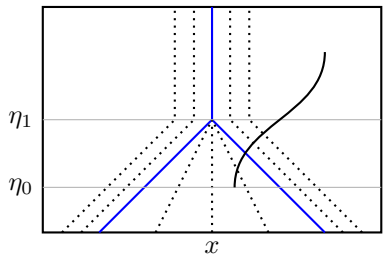
\begin{figure}[htb]

  \centering
  \begin{tikzpicture}[scale=1.5]

  \draw[thick] (-1.5,0) rectangle (1.5,2);

  \draw[thick, blue] ( 1,0) -- (0,1);
  \draw[thick, blue] (-1,0) -- (0,1);
  \draw[thick, blue] ( 0,1) -- (0,2);

  \draw[thick, dotted] (-1.16, 0) -- (-0.16, 1) -- (-0.16, 2);
  \draw[thick, dotted] (-1.33, 0) -- (-0.33, 1) -- (-0.33, 2);
  \draw[thick, dotted] ( 1.16, 0) -- ( 0.16, 1) -- ( 0.16, 2);
  \draw[thick, dotted] ( 1.33, 0) -- ( 0.33, 1) -- ( 0.33, 2);
  \draw[thick, dotted] (-0.5, 0) -- (0,1);
  \draw[thick, dotted] ( 0.5, 0) -- (0,1);
  \draw[thick, dotted] ( 0  , 0) -- (0,1);

  \node[below] at (0,0) {$x$};

  \node[left] at (-1.5,0.4) {$\eta_0$};
  \draw[black!30] (-1.5, 0.4) -- (1.5, 0.4);
  \node[left] at (-1.5,1) {$\eta_1$};
  \draw[black!30] (-1.5, 1) -- (1.5, 1);

  \draw[thick] (0.2, 0.4) to[out=90, in=-90] (1.0, 1.6);
  
  \end{tikzpicture}

\caption{Blue: Jump set of the example function \eqref{eq:example-func-1}, \eqref{eq:example-func-2} in the $x-\mu$-plane. Dotted lines: Curves $\eta \to X(\eta; \mu, x)$ defined in \eqref{eq:example-transform}. Black: Inappropriate transform $\eta \to X(\eta; \mu, x)$ crossing a jump.}

  \label{fig:jump-set}
\end{figure}
for $\mu \ge 1$, depicted in Figure \ref{fig:jump-set}. It serves as a prototype of problems for which the original TSI as introduced in Section \ref{sec:tsi-review} is problematic: It starts out with two jumps for $\mu < 1$ that collide into one single jump for $\mu \ge 1$. This poses a problem for the TSI because we cannot properly align a snapshot with a single jump with a snapshot with two jumps. More formally, the TSI interpolates the transformed snapshots $\eta \to u(X(\eta; \mu, x), \eta)$, which should ideally be smooth in the $\eta$ variable. Since $\eta \to X(\eta; \dots)$ itself is smooth, this can only be true if $X(\eta; \mu, x)$ is not contained in the jump-set of $u(\cdot, \eta)$ for all $\eta \in \param$. Obviously this is impossible for a curve starting between the two jumps in the lower half of Figure \ref{fig:jump-set} indicated by the black solid line crossing a blue jump. In fact, this condition forces all curves $\eta \to X(\eta; \mu, x)$, which  start at $\mu, x$ between the two jumps, to converge to the collision point for $\eta \to 1$, indicated by the dotted lines. The remaining dotted lines complete this to a reasonable transform given by 
\begin{align}
  X(\eta; \mu, x) = \left\{ \begin{array}{rl} 
    x-(\mu-\eta) & x \le -(1 - \mu) \\
    x+(\mu-\eta) & x \ge \phantom{-(} 1 - \mu \phantom{)} \\
    x - x \frac{\mu-\eta}{\mu-1} & \text{else}
\end{array}\right..
\label{eq:example-transform} 
\end{align}
In particular, if we confine our parameter range to $\mu < 1$ or $\mu \ge 1$, these transforms never cross jumps, corresponding to the parameter ranges where $u$ has either one or two jumps. Therefore, we may proceed as follows: We subdivide the parameter domain into two pieces for $\mu \lessgtr 1$ and apply a TSI on each of them. As simple version of this idea, which finds partitions of $\param$ via $h$ or $hp$ adaption, is discussed in \cite{Welper2017a}. A more sophisticated algorithm that cuts the parameter domain precisely at the jump collision is in preparation. In this paper, we do not deal with the proper subdivision but with the necessarily singular transforms:

\begin{enumerate}

  \item For $\mu = 1$ and $\eta < 1$, the transform has a jump at $x=0$. 

  \item The dependence of the $\mu$ variable is of the type $\sim 1/(\mu-1)$ in the regions between the two jumps and therefore singular at $\mu = 1$.

\end{enumerate}

Both problems cannot be handled with the low resolution transforms \eqref{eq:transform-interpol}. These were low order polynomials both in the $x$ and $\mu$ variables. For the $x$ variable one can simply increase the spacial resolution of the transforms to the same resolution of the snapshots. Since the latter must be stored anyways, the overall degrees of freedom do not blow up significantly. However, this higher resolution requires some extra care with regard to the optimization of the error \eqref{eq:opt}, as is discussed in Section \ref{sec:optimize}.

With regard to the singularity in the $\mu$ variable, a simple increase of the resolution is more challenging without increasing the number of snapshots and therefore we discuss an alternative discretization of the transform in Section \ref{sec:ode}.

Although the example discussed in this section is quite simple, the arising problems are quite typical for problems where the jump-sets cannot be properly aligned between snapshots. As we will see later in Section \ref{sec:ode} when we have developed some better tools, singular behaviour of the transforms generally occurs when the jump-set topology changes in parameter.

\section{Transforms as solutions of ODEs}
\label{sec:ode}

In this section, we consider discretizations of the transforms, which allow the types of singular behaviour that we observed in our introductory example in Section \eqref{sec:transform-example}. We have already chosen a notation for the transforms $\eta \to X(\eta) = X(\eta; \mu, x)$ with ``initial value'' $X(\mu, \mu, x) = x$ that suggests to define them as a solution of an ODE
\begin{align}
  \frac{d}{d \eta} X(\eta) & = \Phi(X(\eta), \eta), &
  X(\mu) = x,
  \label{eq:tode}
\end{align}
where we call $\Phi: \Omega \times \param \to \real$ the \emph{transport field}. First note that this definition restricts us to one dimensional parameter domains. Nonetheless, higher dimensional parameters can be treated by a tensor product type construction as shown in \cite{Welper2017a}, where we split dimensions of the parameter space one by one. Even for one dimensional parameters, defining transforms via ODEs restricts our possible choices since it induces a semi-group property in the dynamic variable of the ODE: $X(\eta; \mu, x) = X(\eta; \nu, X(\nu; \mu,x))$. However, this restriction does not seem to be overly severe and has two major benefits: It was argued in \cite{Welper2017} that it greatly helps optimizers to find global minima of \eqref{eq:opt} and as we shall see in this section, it provides a way to find discretizations of the transforms in view of singularities. 

Let us first review the example in Figure \ref{fig:jump-set} again. In order to properly align the jumps, the blue lines must be a solution of the ODE \eqref{eq:tode}, defining the transforms. This implies that the trajectories intersect at the jump collision so that the ODE does not have a unique solution. Therefore, at least one of the assumption of the Picard Lindel\"{o}f theorem cannot be valid, which is typically the Lipschitz continuity of $\Phi$. This provides another motivation for dealing with high spacial resolution transforms.

One can easily generalize this observation: If the jump-sets of two snapshots $u(\cdot, \mu)$ and $u(\cdot, \eta)$ are not homeomorphic, no ODE, which aligns the jumps, can satisfy all assumptions of the Picard Lindel\"{o}f theorem. For if one would, the solution $x \to X(\eta; \mu, x)$ would be a homeomorphism of the respective jump-sets, contradicting the assumption of having a change in topology.

In order to find a numerical method based on the ODE \eqref{eq:tode}, let us assume for the moment that we know $\Phi(x, \mu)$ for all possible values of $x$ and $\mu$. Then we still have to solve the ODE \eqref{eq:tode} to find $X(\eta)$ and ultimately the TSI interpolation. The easiest choice that comes to mind is Euler's method
\[
  X(\eta) \approx x + (\eta - \mu) \Phi(x, \mu),
\]
which has the drawback that we have to be able to evaluate $\Phi$ at every possible target parameter $\mu$, which are of course unknown at the time we compute the transforms. If however we use the implicit Euler method, we have
\begin{equation}
  X(\eta) \approx x + (\eta - \mu) \Phi(X(\eta), \eta),
  \label{eq:impl-euler}
\end{equation}
where $\Phi$ is only evaluated at the parameter $\eta$, which for the TSI reconstruction is only required at the finitely many snapshot locations $\param_m$.

Of course the implicit Euler method limits trajectories $X(\eta)$ to lines, so that in the remainder of this section, we construct higher order alternatives. First note that we cannot use higher order ODE solvers out of the box: Runge Kutta methods require knowledge of $\Phi(\cdot, \eta)$ at intermediate parameters depending on the unknown target $\mu$ and multi-step methods rely on single step methods to get started.

Given these difficulties, we take an alternative but related route. Assume that we know $\Phi(\cdot, \eta)$ at the interpolation points $\eta \in \param_n$, just as we know the snapshots $u(\cdot, \eta)$ at all $\param_m$. Here $\param_n$ is another set of finitely many nodes, which can in principle differ from $\param_m$. Restricting the ODE \eqref{eq:tode} to the information we have, we obtain
\begin{align}
  X'(\eta) & = \Phi(X(\eta), \eta) \text{ for } \eta \in \param_n & X(\mu) & = x,
  \label{eq:transform-interpolation-conditions}
\end{align}
As we shall see in Proposition \ref{prop:implicit-transform} below, these conditions are sufficient to uniquely determine the curve $X(\eta)$, if we restrict them to be polynomials of degree $n = |\param_n|$, denoted by $\poly^n$. The resulting transforms $X(\eta)$ are called \emph{high resolution transforms} in the following. 

Let us first verify that this construction contains the implicit Euler method as a special case. For one single interpolation point, the transform of \eqref{eq:transform-interpolation-conditions} is a line and therefore uniquely determined by the initial condition $X(\mu) = x$ and its slope, $X'(\eta) = \Phi(X(\eta), \eta)$ at the interpolation point $\eta$. This implies that the curve is given by
\[
  X(\xi) = x + (\xi - \mu) \Phi(X(\eta), \eta)
\]
at any point $\xi \in \param$, which corresponds the implicit Euler method for $\xi = \eta$. 

Similar to ODE theory, the next proposition provides some conditions under which the interpolation problem \eqref{eq:transform-interpolation-conditions} has a unique solution.

\begin{proposition}
\label{prop:implicit-transform}
Let $I$ be an interval that contains the interpolation points $\param_n$ and the initial point $\mu$.

\begin{enumerate}

  \item \label{item:global} Assume that all $\Phi(\cdot, \eta): \real^d \to \real^d$, $\eta \in \param_n$ are bounded and continuous. Then the interpolation problem \eqref{eq:transform-interpolation-conditions} has a solution in $\poly^n$ restricted to $I$.

  \item \label{item:local} Assume that all $\Phi(\cdot, \eta): \real^d \to \real^d$, $\eta \in \param_n$ are Lipschitz continuous with Lipschitz constant $L$ and that $\Lambda_{n-1} L |I| < 1$, where $\Lambda_{n-1} := \max_{\mu \in I} \sum_{\eta \in \param_n} |\ell_\eta(\mu)|$ is the Lebesgue constant for the interpolation points $\param_n$. Then the interpolation problem \eqref{eq:transform-interpolation-conditions} has a unique solution in $\poly^n$ restricted to $I$.

\end{enumerate}

\end{proposition}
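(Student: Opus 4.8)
The plan is to recast the interpolation problem \eqref{eq:transform-interpolation-conditions} as a fixed-point equation on a finite-dimensional space and then apply Brouwer's theorem for part \ref{item:global} and the Banach fixed-point theorem for part \ref{item:local}. For a candidate (vector-valued) curve $Y \in \poly^n$ I would define $TY \in \poly^n$ in two steps: first form the degree-$(n-1)$ Lagrange interpolant of the sampled transport field, $p_Y := \sum_{\eta \in \param_n} \ell_\eta\, \Phi(Y(\eta),\eta)$, and then integrate it with the prescribed initial value, $(TY)(\xi) := x + \int_\mu^\xi p_Y(t)\,dt$. By construction $TY \in \poly^n$, $(TY)(\mu) = x$, and $(TY)'(\eta) = p_Y(\eta) = \Phi(Y(\eta),\eta)$ for every $\eta \in \param_n$, so fixed points of $T$ are exactly the solutions of \eqref{eq:transform-interpolation-conditions}, and conversely. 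It is convenient to let $T$ act on the affine subspace $\mathcal{A} := \{Y \in \poly^n : Y(\mu) = x\}$, on which $Y \mapsto \|Y'\|_{L_\infty(I)}$ is a genuine norm, since any $Y \in \mathcal{A}$ is recovered from $Y'$ by integration starting at $\mu$; in particular $\|Y - Z\|_{L_\infty(I)} \le |I|\,\|Y' - Z'\|_{L_\infty(I)}$ for $Y,Z \in \mathcal{A}$.

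For part \ref{item:global}, set $M := \max_{\eta \in \param_n}\|\Phi(\cdot,\eta)\|_{L_\infty(\real^d)} < \infty$. Then every sampled value has norm at most $M$, so $\|p_Y\|_{L_\infty(I)} \le \Lambda_{n-1} M$ directly from the definition of the Lebesgue constant, hence $\|(TY)'\|_{L_\infty(I)} \le \Lambda_{n-1} M$. Therefore $T$ maps the set $K := \{Y \in \mathcal{A} : \|Y'\|_{L_\infty(I)} \le \Lambda_{n-1} M\}$ into itself. This $K$ is nonempty (it contains the constant curve $x$), convex, closed, and bounded in $\poly^n$ (because $\|Y\|_{L_\infty(I)} \le |x| + \Lambda_{n-1} M |I|$ on $K$ and all norms on the finite-dimensional space $\poly^n$ restricted to $I$ are equivalent), hence compact. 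Continuity of $T$ on $K$ follows by composing the continuous evaluations $Y \mapsto Y(\eta)$ with the continuous $\Phi(\cdot,\eta)$, then the linear map $(\Phi(Y(\eta),\eta))_{\eta} \mapsto p_Y$, then the affine integration map $p_Y \mapsto TY$. Brouwer's fixed-point theorem, applied to the continuous self-map $T$ of the compact convex set $K$, then yields a fixed point in $K$, i.e. a solution of \eqref{eq:transform-interpolation-conditions} in $\poly^n$.

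For part \ref{item:local}, I would show that $T$ is a contraction on $\mathcal{A}$ with respect to the norm $Y \mapsto \|Y'\|_{L_\infty(I)}$. Since $(TY)' - (TZ)' = p_Y - p_Z = \sum_{\eta \in \param_n} \ell_\eta\big(\Phi(Y(\eta),\eta) - \Phi(Z(\eta),\eta)\big)$, the Lebesgue-constant bound, the Lipschitz estimate $\|\Phi(Y(\eta),\eta) - \Phi(Z(\eta),\eta)\| \le L\|Y(\eta)-Z(\eta)\|$, and the inequality $\|Y-Z\|_{L_\infty(I)} \le |I|\,\|Y'-Z'\|_{L_\infty(I)}$ combine to give $\|(TY)' - (TZ)'\|_{L_\infty(I)} \le \Lambda_{n-1} L |I|\,\|Y'-Z'\|_{L_\infty(I)}$. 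By hypothesis $\Lambda_{n-1} L |I| < 1$, so $T$ is a contraction on the complete metric space $\mathcal{A}$ (complete because it is finite-dimensional), and the Banach fixed-point theorem provides a unique fixed point, hence a unique solution of \eqref{eq:transform-interpolation-conditions} in $\poly^n$.

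The norm estimates above are entirely routine (Lebesgue-constant bound plus integration from the initial point), and the only slightly delicate bookkeeping is in part \ref{item:global}, namely exhibiting an invariant set $K$ that is at once convex, compact, and mapped into itself, and applying Brouwer's theorem in its form for compact convex subsets of a finite-dimensional space rather than for a literal ball. I do not expect a genuine obstacle: the main step is the reformulation itself, i.e. recognizing that the true unknown is the derivative polynomial $X' \in \poly^{n-1}$, reconstructed from the sampled transport field by Lagrange interpolation at $\param_n$, after which both statements reduce to standard fixed-point theorems with the contraction constant coming out exactly as $\Lambda_{n-1} L |I|$.
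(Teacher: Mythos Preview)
Your proposal is correct and essentially the same as the paper's proof: both recast \eqref{eq:transform-interpolation-conditions} as a fixed-point equation for the map $Y \mapsto x + \int_\mu^{\cdot} \interpol_{n-1}\Phi(Y(\cdot),\cdot)$, obtain existence via a topological fixed-point theorem on a bounded convex subset of $\poly^n$, and obtain uniqueness via Banach's contraction principle with the same constant $\Lambda_{n-1} L |I|$. The only cosmetic differences are that the paper invokes Schauder where you invoke Brouwer (equivalent here since $\poly^n$ is finite-dimensional), and the paper works with the sup norm $\|X\|_{L_\infty(I)}$ on all of $\poly^n$ while you work with the derivative norm $\|Y'\|_{L_\infty(I)}$ on the affine slice $\mathcal{A}$; both routes produce the identical contraction constant.
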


Note that except for the discrete locations of the jump-set topology changes, the maps $\Phi(\cdot, \eta)$ can be expected to be continuous. Close to the toplogy changes, the Lipschitz constant will deteriorate, which is discussed at the end of this Section. In addition, we are only interested in $\Phi(x, \eta)$ with $x \in \Omega$, so that the boundedness assumption can be met by suitable modifications of $\Phi$ outside of $\Omega$. Therefore, with rather mild conditions the first part of the Proposition ensures the global existence of transforms which satisfy \eqref{eq:transform-interpolation-conditions}.

As argued at the beginning of this section, we expect non-unique solutions at the jump-set topology changes for the ODE \eqref{eq:tode}. Likewise, we cannot expect global uniqueness for the discrete variant \eqref{eq:transform-interpolation-conditions} either. Nonetheless, locally this is possible by classical arguments as stated by the second part of the proposition.

The proof is essentially a discrete version of the Picard-Lindel\"{o}f theorem. To see this more clearly and for later reference, by the uniqueness of polynomial interpolation, the problem \eqref{eq:transform-interpolation-conditions} is equivalent to to following continuous variant: find $x \in \poly_n$ such that
\begin{align}
  X'(\eta) & = [\interpol_{n-1} \Phi(X(\cdot), \cdot)](\eta) \text{ for } \eta \in \param, & X(\mu) & = x,
  \label{eq:transform-interpolation-conditions-continuous}
\end{align}
or equivalently
\begin{equation}
  X(\xi) = x + \int_\mu^\xi [\interpol_{n-1} \Phi(X(\cdot), \cdot)](\gamma) \, d \gamma =: F[X](\xi),
  \label{eq:ode-poly-fp}
\end{equation}
where $\interpol_{n-1}$ is the interpolation operator on the $n$ points $\param_n$. Note that this is not a standard ODE because the right hand side depends $X(\gamma)$ for all $\gamma \in \param_n$ instead of $X(\eta)$ at the single point $\eta$. Nonetheless, the Picard-Lindel\"{o}f theorem carries over verbatim. 

\begin{proof}

  We use the usual fixed point argument applied to the fixed point function $F$ defined in \eqref{eq:ode-poly-fp} on the space $\poly_n(I)$ of polynomials up to degree $n$, restricted to the interval $I$ and equipped with the norm $\|X\| := \max_{\gamma \in I} |X(\gamma)|$. Due to the interpolation $\interpol_{n-1}$ in the definition of $F$ and the subsequent integration $F$ maps $\poly_n(I)$ to itself. 

We first show Part \ref{item:global} of the proposition. Instead of the usual contraction mapping principle, we exploit the finite dimensional nature of $\poly_n$ and use Schauder's fixed point theorem. To this end, note that the continuity of $\Phi(\cdot, \eta)$ implies the continuity of $F$. In addition, we have
\begin{align*}
  \|F[X]\| & = \max_{\xi \in I} \left| x + \int_\mu^\xi [\interpol_{n-1} \Phi(X(\cdot), \cdot)](\gamma) \, d \gamma \right| \\
  & \le |x| + \Lambda_{n-1} \|\Phi(X(\cdot), \cdot)\| \int_\mu^\xi \, d \gamma \\
  & \le |x| + \Lambda_{n-1} M |I|,
\end{align*}
where $M$ is the upper bound of $|\Phi(\cdot, \eta)|$, $\eta \in \param_n$ and $\Lambda_{n-1}$ is the Lebesgue constant for the interpolation $\interpol_{n-1}$. It follows that $F$ maps the ball of all polynomials $p \in \poly_n$ with $\|p\| \le |x| + \Lambda_{n-1} M |I|$ to itself. In conclusion, Schauder's fixed point theorem implies the existence of a fixed point of the integral equation \eqref{eq:ode-poly-fp} and thus the discrete definition \eqref{eq:transform-interpolation-conditions} of the transforms.

Since Schauder's fixed point theorem does not guarantee uniqueness, we continue to proof part \ref{item:local} of the proposition using the classical contraction mapping principle. To show that $F$ is a contraction, first note that
\begin{align*}
  \left\| \interpol_{n-1} \Phi(X(\cdot), \cdot) - \interpol_{n-1} \Phi(Y(\cdot), \cdot) \right\|
  & \le \Lambda_{n-1} \max_{\eta \in \param} \left| \Phi(X(\eta), \eta) - \Phi(Y(\eta), \eta)\right| \\
  & \le \Lambda_{n-1} L \|X - Y\|,
\end{align*}
where again $\Lambda_{n-1}$ is the Lebesgue constant and $L$ the Lipschitz constant of $\Phi$. It follows that
\begin{equation}
    \|F[X] - F[Y]\|
    \le \Lambda_{n-1} L \left\| X - Y \right\| \int_\mu^\xi \, d \gamma
    \le \Lambda_{n-1} L |I| \left\| X - Y \right\|.
  \label{eq:contraction}
\end{equation}
Since $\Lambda_{n-1} L |I| < 1$, the proposition follows from Banach's fixed point theorem.

\end{proof}

\begin{remark}
  Since $X(\eta)$ is a polynomial, which is uniquely defined on the interval $I$, we may extend it to the real line $\real$ still satisfying the discrete and continuous interpolation problems \eqref{eq:transform-interpolation-conditions} and \eqref{eq:transform-interpolation-conditions-continuous}. However, beyond the interval $I$ in Proposition \ref{prop:implicit-transform} part \ref{item:local}, the solution is not necessarily unique in the sense that there can be two solutions $x$ and $y$ of \eqref{eq:transform-interpolation-conditions} with different initial values $X(\mu) = x$ and $Y(\mu) = y$ that intersect outside of $I$.
\end{remark}

Let us compare the new transform $X(\eta)$ constructed in the above Proposition with the original choice \eqref{eq:transform-interpol} defined via polynomial interpolation. The original choice is polynomial in $\mu$, whereas the ``ideal'' transform in the example of Section \ref{sec:transform-example} is rational in $\mu$. In contrast, both the new transform and the ``ideal'' one are polynomial in $\eta$. In addition, the stability function of the implicit Euler method is a rational function of $\eta-\mu$ and thus for a fixed interpolation node $\eta$ and variable target $\mu$ a rational function of $\mu$. Again, this matches the ``ideal'' transform of the example.

For a practical application of the new transform, we need an algorithm to solve the interpolation problem \eqref{eq:transform-interpolation-conditions}. 
Since the fixed point iteration for the map $F: \poly_n \to \poly_n$ in the proof of Proposition \ref{prop:implicit-transform} is defined on polynomials, it can be carried out practically and it remains to devise an algorithm to compute $Y = F[X] \in \poly_n$. This is reminiscent of implicit ODE solvers as e.g. the implicit Euler method \eqref{eq:impl-euler}, which are also often solved by fixed point methods. We carry out the fixed point iteration in a Newton basis, which easily leads to an algorithm that is sufficient for the first experiments presented in this paper, although more clever approaches might be possible.

Although we integrate polynomials, we cannot readily apply standard quadrature rules because the integration bounds are variable and not known beforehand. As an alternative, we first differentiate the definition \eqref{eq:ode-poly-fp} of $F$ to obtain
\begin{equation}
  Y'(\xi) := \frac{d}{d \xi} F[X](\xi) = \left[ \interpol_{n-1} \Phi(X(\cdot), \cdot) \right](\xi)
  \label{eq:interpol-ode}
\end{equation}
The derivative $Y'(\xi)$ is a polynomial in $\poly_{n-1}$ and thus completely specified by its values at the interpolation points $\eta \in \param_n$, which are
\[
  Y'(\eta) = \left[ \interpol_{n-1} \Phi(X(\cdot), \cdot) \right](\eta) = \Phi(X(\eta), \eta),
\]
Of course these are the same interpolation conditions as in the original problem \eqref{eq:transform-interpolation-conditions}, with the difference that currently $X$ is not necessarily a fixed point. Now, let us represent $Y$ in a Newton basis with respect to the interpolation points $\{\mu\} \cup \param_n = \{\mu, \eta_1, \dots, \eta_n\}$, i.e.
\begin{align}
  Y(\xi) & = \sum_{i=0}^n a_i \omega_i(\xi), 
  & \omega_0(\xi) & = 1
  & \omega_i(\xi) & = (\xi-\mu) \prod_{j<i} (\xi-\eta_i).
  \label{eq:transform-newton-basis}
\end{align}
First note that choosing $\xi = \mu$ the definition of $F$ implies that $a_0 = x$. For the remaining $a_1, \dots, a_n$, we first differentiate $Y'(\xi) = \sum_{i=1}^n a_i \omega_i'(\xi)$ and plug in the interpolation points to obtain the system
\begin{align}
  \begin{pmatrix}
    \omega_1'(\eta_1) & \dots & \omega_n'(\eta_1) \\
    \vdots & & \vdots \\
    \omega_1'(\eta_n) & \dots & \omega_n'(\eta_n)
  \end{pmatrix}
  \begin{pmatrix}
    a_1 \\ \vdots \\ a_n
  \end{pmatrix}
  =
  \begin{pmatrix}
    \Phi(X(\eta_1), \eta_1) \\
    \vdots \\
    \Phi(X(\eta_n), \eta_n)
  \end{pmatrix}.
\label{eq:transform-coefficients}
\end{align}
Note that for Newton interpolation we have $\omega_j(\eta_i) = 0$ for $j>i$, which leads to triangular Vandermonde type matrices for standard interpolation problems. However, this does not imply that also $\omega_j'(\eta_i) = 0$ for $j>i$. Therefore, in our case we deal with a full matrix. Nonetheless, we can compute an LU decomposition beforehand and solve the system efficiently for every new right hand side.

Now that we can compute $Y=F[X]$, we can numerically solve the interpolation problem \eqref{eq:transform-interpolation-conditions} by the fixed point iteration
\begin{align}
  X^{k+1} & = F[X^k], & X^0 = x.
  \label{eq:fixed-point}
\end{align}
The following Corollary to Proposition \ref{prop:implicit-transform} states that this converges to a solution of the interpolation problem.

\begin{corollary}
  \label{cor:fixed-point}
  Assume that all assumptions of Proposition \ref{prop:implicit-transform} part \ref{item:local} are satisfied. Then the fixed point iteration \eqref{eq:fixed-point} converges to a solution of the interpolation problem \eqref{eq:transform-interpolation-conditions}.
\end{corollary}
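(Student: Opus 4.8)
The plan is to observe that the work is already done inside the proof of Proposition~\ref{prop:implicit-transform}, and to invoke Banach's fixed point theorem in the form that also yields convergence of the Picard iteration from an arbitrary starting point. Concretely, I would first recall the setting: the space $\poly_n(I)$ equipped with $\|X\| := \max_{\gamma \in I} |X(\gamma)|$ is a finite-dimensional normed vector space, hence a complete metric space; the map $F$ of \eqref{eq:ode-poly-fp} sends $\poly_n(I)$ into itself (because of the interpolation $\interpol_{n-1}$ and the subsequent integration); and, by the estimate \eqref{eq:contraction}, under the hypotheses of Proposition~\ref{prop:implicit-transform} part~\ref{item:local} the map $F$ is Lipschitz with constant $q := \Lambda_{n-1} L |I| < 1$, i.e.\ it is a contraction.

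Next I would check that the iteration \eqref{eq:fixed-point} is well defined: the initial iterate $X^0 \equiv x$ is the constant polynomial, so $X^0 \in \poly_n(I)$, and applying $F$ repeatedly produces a sequence $(X^k)_{k \ge 0}$ in $\poly_n(I)$. From the contraction property one gets $\|X^{k+1} - X^k\| \le q \, \|X^k - X^{k-1}\| \le q^k \|X^1 - X^0\|$, so a geometric-series estimate shows $(X^k)$ is a Cauchy sequence; by completeness of $\poly_n(I)$ it converges to some $X^\ast \in \poly_n(I)$. Since a contraction is in particular continuous, passing to the limit in $X^{k+1} = F[X^k]$ gives $X^\ast = F[X^\ast]$, so $X^\ast$ is a fixed point of $F$.

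Finally I would translate back to the interpolation problem: as noted just before the proof of Proposition~\ref{prop:implicit-transform}, by uniqueness of polynomial interpolation a fixed point of $F$ in \eqref{eq:ode-poly-fp} is exactly a solution of \eqref{eq:transform-interpolation-conditions} (it satisfies $X(\mu) = x$ from $\xi = \mu$ and $X'(\eta) = \Phi(X(\eta),\eta)$ for $\eta \in \param_n$ from differentiating). Hence $X^\ast$ solves \eqref{eq:transform-interpolation-conditions}. One may also remark that, since the solution is unique under the assumptions of part~\ref{item:local}, the limit does not depend on the starting iterate, and $X^0 \equiv x$ is merely a convenient choice consistent with the initial condition $X(\mu) = x$.

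\textbf{Main obstacle.} There is essentially no substantive obstacle: the corollary is an immediate consequence of the contraction bound \eqref{eq:contraction} already established for Proposition~\ref{prop:implicit-transform}. The only points that genuinely need to be stated are (i) that Banach's theorem delivers convergence of the iterates, not just existence of a fixed point, and (ii) that completeness of the finite-dimensional iterate space $\poly_n(I)$ is what turns the Cauchy estimate into actual convergence; both are routine.
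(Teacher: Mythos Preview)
Your proposal is correct and takes essentially the same approach as the paper: the paper's proof simply states that the corollary is a direct consequence of the proof of Proposition~\ref{prop:implicit-transform} part~\ref{item:local}, and you have spelled out exactly what that consequence is, namely that the contraction estimate \eqref{eq:contraction} on the complete space $\poly_n(I)$ lets Banach's fixed point theorem deliver convergence of the iterates.
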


\begin{proof} 
  
The corollary is a direct consequence of the proof of Proposition \ref{prop:implicit-transform} part \ref{item:local}.

\end{proof}

The fixed point iteration \eqref{eq:fixed-point} provides us with polynomials $\eta \to X(\eta; \mu, x)$ for $\eta \in \param$. For the TSI \eqref{eq:tsi}, we only need the transforms at the locations $\eta \in \param_m$, which we can easily evaluate even if $\param_m \ne \param_n$, i.e. we use different nodes for the transform field than for the snapshots.

As discussed earlier, we expect the Lipschitz constant $L$ to deteriorate near jump-set topology changes so that a plain fixed point iteration as in the last corollary is generally insufficient. Nonetheless, we can counterbalance this with a small interval length $I$ in order to meet the assumptions of Proposition \ref{prop:implicit-transform}. The obvious choice is to add more snapshots close to the singularity, but that clearly defeats the purpose of this section. Alternatively, we use this observation to create better initial values for the fixed point iteration. To this end, note that neither the definition of $F[X]$ nor the assumptions of Proposition \ref{prop:implicit-transform} need $\Phi(\cdot, \eta)$ for the full continuum of parameters $\eta$, but only for the few discrete $\eta \in \param_n$. For clarity, let us call them $\Phi_0, \dots, \Phi_n$. Once we have fixed these functions, the interval length $I$ only enters the equations via the Newton basis \eqref{eq:transform-newton-basis} for the interpolation points $\{\mu\} \cup \param_n$. Therefore, in order to obtain a shorter interval $I$, we rescale these interpolation points by
\begin{equation}
  \{\mu\} \cup \{ \mu + s(\eta - \mu) : \, \eta \in \param_n\}
  \label{eq:scaling}
\end{equation}
which match the original choice with $s=1$ and shrink to $\{\mu\}$ for $s=0$. For $s$ sufficiently small, the fixed point iteration \eqref{eq:fixed-point} will then be successful for the unchanged $\Phi_0, \dots, \Phi_n$. We use the outcome as an improved initial value for a new fixed point iteration with larger scaling $s$, although this case may no longer be covered by Corollary \ref{cor:fixed-point}. In the numerical experiments, we choose an increasing sequence of scaling factors $s_i$ and perform fixed point iterations \eqref{eq:fixed-point} for the corresponding scaled parameters, each using the last iterate from the previous scaling as initial value.

\section{A note about stability}
\label{sec:stability}

Since we are dealing with (near) singular jump-sets, we have to be careful about the stability of the TSI reconstruction with respect to perturbations of the transforms. Such perturbations are generally inevitable because of numerical errors due to iterative optimizers for \eqref{eq:opt} and discretizations of the transport fields $\Phi$. 

Let us first consider a heuristic motivation in $1d$. Assume we have two transforms $X_i(x) := X_i(\eta; \mu, x)$, $i=0,1$. Ignoring higher order terms, and using the substitution $y = X_0(x)$ we can informally estimate the perturbation error between two transformed snapshots $v \circ X_i(x) := u(X_i(x), \eta)$ by
\begin{equation}
  \begin{aligned}
    \|v \circ X_1 - v \circ X_0\|_{L_1(\Omega)} 
    & \approx \int_\Omega |u'(X_0(x))| |X_0(x) - X_1(x)| \, dx \\
    & = \int_\Omega |u'(y)| |y - X_1(X_0^{-1}(y))| \frac{1}{|X_0'(x)|} \, dy \\
    & \le \sup_{y \in \Omega} \left[ \frac{|y - X_1(X_0^{-1}(y))|}{|X_0'(x)|} \right] \int_\Omega |u'(y)|  \, dy \\
    & = \sup_{x \in \Omega} \left[ \frac{|X_0(x) - X_1(x)|}{|X_0'(x)|} \right] \|u\|_{BV(\Omega)}.
  \end{aligned}
  \label{eq:stability-informal}
\end{equation}
If $X_0'$ is bounded from above and below, this gives us an estimate of the type
\begin{equation*}
  \|v \circ X_1 - v \circ X_0\|_{L_1(\Omega)} 
  \le C \|X_0(x) - X_1(x)| \|_{L_\infty(\Omega)} \|u\|_{BV(\Omega)}.
\end{equation*}
for some constant $C$. However, this argument is not fully satisfactory for two reasons.
\begin{enumerate}
  \item In the vicinity of jump-set topology changes $X_0'(x)$ is typically not uniformly bounded from above and below.
  \item If $X_i$ has a jump or sharp gradient, reasonable perturbations may not be close in the $L_\infty$-norm.
\end{enumerate}
If we are more careful, we see that wherever $X_0$ has a large gradient so that we possibly have a large $L_\infty$ error of the transforms, the denominator of the right hand side of \eqref{eq:stability-informal} is also large. The argument does not carry over to sharp gradients of $X_1$, but with an analogous argument, we could have had $X_1'$ in the denominator instead. Therefore, in the following, we consider the question if a judicious choice of the denominator can at least in principle provide some control of the perturbation error in the face of sharp gradients.

The choice of the denominator, now for arbitrary spacial dimensions, is done by selecting a homotopy, i.e. a continuous transition, $X_s(\eta; \mu, x)$, $0 \le s \le 1$ between the two transforms so that the boundary cases $X_0(\eta; \mu, x)$ and $X_1(\eta; \mu, x)$ are our original two perturbed transforms. A simple candidate is a convex combination $X_s = (1-s)X_0 + s X_1$. The following stability result is a rigorous version of our heuristic argument above and a slight refinement from a proposition in \cite{Welper2017}. 

\begin{proposition}
  \label{prop:stab}
  Assume that $u \in BV(\Omega)$ and that
  \begin{equation*}
    \int_{X_s(\eta; \mu, \cdot)^{-1}(A)} \, dx  \le B |A|
  \end{equation*}
  for some $B \ge 0$, all measurable sets $A$, all $\eta \in \param_n$, $\mu \in \param$ and $s \in \{0,1\}$. Using $\dot{X} := \frac{d}{ds}X_s$, define
  \begin{equation}
    S(\mu, \eta) := \sup_{A} \frac{1}{|A|} \int_0^1 \int_{X_s(\eta; \mu, \cdot)^{-1}(A)} |  \dot{X}_s(\eta; \mu, x)| \, dx \, ds.
    \label{eq:stability-factor}
  \end{equation}
  Then, we have
  \begin{equation*}
    \|u_m(\cdot, \mu; X_0) - u_m(\cdot, \mu; X_1)\|_{L_1(\Omega)}
    \le \Lambda_n \max_{\eta \in \param_m} \big[ S(\mu, \eta) \|u(\cdot, \eta)\|_{BV(\Omega)} \big],
  \end{equation*}
  independent of $B$ and where $\Lambda_m$ is the Lebesgue constant.
\end{proposition}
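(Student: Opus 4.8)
The plan is to reduce the statement to a bound on a single transformed snapshot difference and then sum over the Lagrange basis. First I would write, using the definition \eqref{eq:tsi} of the TSI,
\[
  u_m(\cdot,\mu;X_0) - u_m(\cdot,\mu;X_1)
  = \sum_{\eta\in\param_m} \ell_\eta(\mu)\,\big[ u(X_0(\eta;\mu,\cdot),\eta) - u(X_1(\eta;\mu,\cdot),\eta)\big],
\]
take $L_1(\Omega)$ norms, apply the triangle inequality and pull out the Lebesgue constant $\Lambda_m = \max_\mu \sum_\eta |\ell_\eta(\mu)|$, reducing everything to estimating $\|u(X_0(\eta;\mu,\cdot),\eta) - u(X_1(\eta;\mu,\cdot),\eta)\|_{L_1(\Omega)}$ for a fixed $\eta\in\param_m$ in terms of $S(\mu,\eta)\|u(\cdot,\eta)\|_{BV(\Omega)}$.

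The core of the argument is this per-$\eta$ bound. Writing $v := u(\cdot,\eta)$ and $X_s := X_s(\eta;\mu,\cdot)$, I would use the homotopy to write $v\circ X_1 - v\circ X_0 = \int_0^1 \frac{d}{ds}\big(v\circ X_s\big)\,ds$. Since $v\in BV$ this must be made rigorous by a mollification/approximation argument: replace $v$ by a smooth $v_\varepsilon$ with $v_\varepsilon \to v$ in $L_1$ and $\|Dv_\varepsilon\|_{L_1}\to\|v\|_{BV}$, so that $\frac{d}{ds}(v_\varepsilon\circ X_s) = \nabla v_\varepsilon(X_s)\cdot \dot X_s$. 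Then
\[
  \|v_\varepsilon\circ X_1 - v_\varepsilon\circ X_0\|_{L_1(\Omega)}
  \le \int_0^1 \int_\Omega |\nabla v_\varepsilon(X_s(x))|\,|\dot X_s(x)|\,dx\,ds.
\]
Now I would introduce the layer-cake / coarea idea implicitly encoded in $S$: for fixed $s$, think of $|\nabla v_\varepsilon(X_s(x))|$ as $\int_0^\infty \mathbf{1}\{|\nabla v_\varepsilon(X_s(x))| > t\}\,dt$, so that the $x$-integral becomes $\int_0^\infty \int_{X_s^{-1}(A_t)} |\dot X_s(x)|\,dx\,dt$ where $A_t := \{y : |\nabla v_\varepsilon(y)| > t\}$. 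Each inner integral is bounded by $|A_t| \cdot \frac{1}{|A_t|}\int_{X_s^{-1}(A_t)}|\dot X_s|\,dx$; integrating in $s$ first and taking the sup over measurable sets in the definition \eqref{eq:stability-factor} of $S$ gives $\int_0^1 \int_{X_s^{-1}(A_t)}|\dot X_s|\,dx\,ds \le S(\mu,\eta)\,|A_t|$. Hence the whole expression is bounded by $S(\mu,\eta)\int_0^\infty |A_t|\,dt = S(\mu,\eta)\,\|\nabla v_\varepsilon\|_{L_1(\Omega)}$. Finally, using the measure-comparison hypothesis $\int_{X_s^{-1}(A)}dx \le B|A|$ for $s\in\{0,1\}$ to control $\|v_\varepsilon\circ X_i - v\circ X_i\|_{L_1(\Omega)} \le B\|v_\varepsilon - v\|_{L_1(\Omega)}$, I can pass to the limit $\varepsilon\to 0$: the right-hand side converges to $S(\mu,\eta)\|v\|_{BV(\Omega)} = S(\mu,\eta)\|u(\cdot,\eta)\|_{BV(\Omega)}$, and the left-hand side converges to $\|v\circ X_1 - v\circ X_0\|_{L_1(\Omega)}$, which is exactly what the estimate needs. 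The bound is manifestly independent of $B$ since $B$ only appears in the vanishing approximation error.

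Combining the two steps: $\|u_m(\cdot,\mu;X_0)-u_m(\cdot,\mu;X_1)\|_{L_1(\Omega)} \le \sum_\eta |\ell_\eta(\mu)|\, S(\mu,\eta)\|u(\cdot,\eta)\|_{BV(\Omega)} \le \Lambda_m \max_{\eta\in\param_m}\big[S(\mu,\eta)\|u(\cdot,\eta)\|_{BV(\Omega)}\big]$, as claimed (noting the statement's $\Lambda_n$ should read $\Lambda_m$, matching the snapshot node set $\param_m$). The main obstacle I anticipate is the rigorous handling of the $BV$ function under composition with $X_s$: the chain rule $\frac{d}{ds}(v\circ X_s) = \nabla v(X_s)\cdot\dot X_s$ is not literally valid for $v\in BV$, so the mollification argument must be set up carefully, and in particular the measure-comparison hypothesis on $X_s$ is exactly what is needed to ensure $v_\varepsilon\circ X_i \to v\circ X_i$ in $L_1$ — this is why that hypothesis appears in the statement even though it does not show up in the final constant. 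A secondary technical point is justifying that $S(\mu,\eta)$ as defined (a sup over all measurable $A$, not just the super-level sets $A_t$) dominates the layer-cake expression, which is immediate but worth stating.
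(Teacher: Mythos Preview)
Your proposal is correct and follows essentially the same route as the paper: reduce to a single-snapshot bound via the triangle inequality and Lebesgue constant, establish $\|v\circ X_0 - v\circ X_1\|_{L_1}\le S(\mu,\eta)\|v\|_{BV}$ first for smooth $v$ via the homotopy and chain rule, then pass to $v\in BV$ by density using the measure-comparison hypothesis exactly as you describe. The only cosmetic difference is in the core inequality: the paper packages the step by introducing the transition kernel $\tau(s,A)=\int_A|\dot X_s|\,dx$, pushes it forward to $\kappa(s,\cdot)=(X_s)_*\tau(s,\cdot)$, observes that the definition of $S$ says $(\lambda_1\otimes\kappa)([0,1]\times A)\le S(\mu,\eta)\,|A|$, and then invokes Fubini for transition kernels to integrate $|v'|$ against this dominated measure---whereas you make the same domination explicit via the layer-cake decomposition of $|\nabla v_\varepsilon|$ into super-level sets $A_t$. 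These are two presentations of the same inequality (measure domination on all sets $\Leftrightarrow$ integral domination for nonnegative integrands), so there is no substantive divergence. Your remark on the $\Lambda_n$/$\Lambda_m$ typo is also apt.
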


The proof is essentially the same as in \cite{Welper2017} and given in Appendix \ref{appendix-proof-prop}. For sufficiently regular transforms $X_s$, we can simplify the stability factor $S(\mu, \eta)$: With $X_s(x) = X_s(\eta; \mu, x)$ we have
\begin{equation}
  \begin{aligned}
    S(\mu, \eta) & = \sup_{A} \frac{1}{|A|} \int_0^1 \int_{X_s^{-1}(A)} | \dot{X}_s(x) | \, dx \, ds \\
    & = \sup_{A} \int_0^1 \frac{1}{|A|} \int_{A} | \dot{X}_s(X_s^{-1}(y))| |\det D X_s^{-1}(y)| \, dy \, ds \\
    & = \int_0^1 \sup_{y \in \Omega} |\dot{X}_s(X_s^{-1}(y))| |\det D X_s^{-1}(y)| \, ds \\
    & = \int_0^1 \sup_{x \in \Omega} \frac{|\dot{X}_s(x)|}{|\det D X_s(x)|} \, ds.
  \end{aligned}
  \label{eq:stability-factor-det}
\end{equation}
Note the similarity to our informal calculation \eqref{eq:stability-informal}: For the nominator, we have $|\dot{X}_s(x)| \approx |X_0(x) - X_1(x)|$ and in $1d$ the denominator simplifies to $|X_s'(x)|$. In addition, because of the $s$ dependence, we now have some explicit control over the denominator by choosing a suitable homotopy.

In order to better understand the stability factor $S(\mu, \eta)$, let us first consider the integral bound $X_s(\eta; \mu, \cdot)^{-1}(A)$ in \eqref{eq:stability-factor}, or equivalently the denominator in \eqref{eq:stability-factor-det}. To this end, we consider a set $A \subset \Omega$ and observe how its volume changes when it is transformed along the flow $\eta \to A_\eta := X_s(\eta; \mu, A)$. We expect the stability factor to be small if the volume $|A_\mu| = |X_s(\eta; \mu, \cdot)^{-1}(A_\eta)|$ is not much larger that the volume $|A_\eta|$. This implies that contracting sets from $\eta \to \mu$ is expected to be stable, while growing them is probably unstable. 

To clarify this point, let us consider the example in Figure \ref{fig:jump-set} again. Say we have two parameters $\mu_c$, where the two jumps collide, and $\mu_2 < \mu_c$ for which we have two separate jumps. Let us first try to approximate $u(\cdot, \mu_c)$ form a transformation of $u(\cdot, \mu_2)$. If we choose any set $A_{\mu_2}$ between the two jumps, the transform indicated in the figure shrinks it to zero volume. By our analysis this is stable and intuitively all perturbations in the set $A_{\mu_2}$ are squashed to measure zero and thus irrelevant. Vice versa, using a transform of the snapshot $u(\cdot, \mu_c)$ to approximate $u(\cdot, \mu_2)$ is problematic. Our stability bound becomes large matching the intuition that the snapshot does not provide any information on how to fill the void between the two jumps of the target $u(\cdot, \mu_2)$.

Let us next consider the problem with the $L_\infty$-norm for a simple example problem. We choose $X_s(\eta; \mu, x) = X_0(\eta; \mu, x - s \epsilon v)$ for some direction $v$, so that the perturbation $X_1$ is a shift of $X_0$, which generally has large $L_\infty$ error if $X_0$ has sharp gradients or jumps. Note that with this definition we also have chosen a homotopy between the two transforms. Next, we assume that $\|D X_s(\eta; \mu, \cdot)^{-1}\|_2 \le c$ for some constant $c$. This allows the transform $X_s(\eta; \mu, \cdot)$ to have sharp gradients and places us in the situation where the sets $A_\eta$ grow at most moderately for which we expect stable behaviour by the discussion above. In the example of Figure \ref{fig:jump-set}, this includes the case of approximating $u(\cdot, \mu_c)$ form $u(\cdot, \mu_2)$ using a perturbed transform that misses the correct collision location in physical space by $\epsilon$.

Computing the stability factor \eqref{eq:stability-factor-det} yields
\[
  S(\mu, \lambda) = \int_0^1 \sup_{x \in \Omega} \frac{|\dot{X}_s(x)|}{|\det D X_s(x)|} \, ds
  \le \int_0^1 \sup_{x \in \Omega} s \epsilon \frac{|D X_s(x)|_2|v|}{|\det D X_s(x)|} \, ds.
\]
where $|\cdot|_2$ denotes the $\ell_2(\real^d)$ matrix norm. With the singular values $\sigma_1^s(x) \ge \cdots \ge \sigma_d^s(x) \ge c$ of $DX_s$, this simplifies to 
\[
  S(\mu, \lambda)
  = \int_0^1 s \epsilon |v| \sup_{x \in \Omega} \frac{\sigma_1^s(x)}{\prod_{i=1}^d \sigma_i^s(x)} \, ds 
  \le \epsilon |v| c^{1-d} \int_0^1 s \, ds = 
  \le \frac{1}{2} \epsilon |v| c^{1-d}.
\]
Thus, even if the $L_\infty$ error of the perturbed transforms is large, we may nonetheless have a small TSI error. Of course the result relies on a carefully chosen homotopy $X_s$. How to choose this in the general case is beyond the scope of this paper.

\section{Optimizing Singular Transforms}
\label{sec:optimize}

\subsection{Difficulties with the optimization of the transform}

In Section \ref{sec:ode}, we discussed singularities of the transforms at jump-set topology changes and efficient ways to approximate them. However, we still need a way to automatically calculate these transforms from the available data, i.e. the snapshots and training snapshots. As in \eqref{eq:opt}, we use an optimization problem 
\begin{align}
  & \min_{\Phi \in \transforms} \sigma(\Phi), &  \sigma(\Phi) := \|u(\cdot, \mu) - u_m(\cdot, \mu; \Phi)\|_{L_1(\Omega)},
  \label{eq:opt-ode}
\end{align}
where the notation $u_m(x, \mu; \Phi) = u_m(x,\mu)$ is used to emphasize the dependence of the TSI on the choice of the transport field $\Phi$ and $\transforms \subset \banach$ is an admissible set of transforms in a Banach space $\banach$ as discussed below. Note that the constraints $\transforms$ can be used to ensure that the resulting curves $X(\eta)$ do not leave the spacial domain $\Omega$ and that we only need to know $\Phi(\cdot, \eta)$ for finitely many $\eta \in \param_n$. We therefore only need a spacial discretization of $\Phi$ by selecting a basis, or more generally a dictionary, $\{\psi_i\}_{i \in \indexset}$ for some index set $\indexset$, with $\operatorname{span} \{\psi_i\}_{i \in \indexset} = \banach_n \subset \banach$ and optimize
\begin{align}
  & \min_{c \in \real^N} \boldsymbol{\sigma}(\boldsymbol{c}), &  \boldsymbol{\sigma}(\boldsymbol{c}) := \sigma \left( \sum_{i=1}^N c_i \psi_i \right).
  \label{eq:opt-ode-finite}
\end{align}
Instead of strictly enforcing that $x \to X(\eta;\mu, x)$ maps $\Omega$ to itself, we usually assume that $\nu \cdot \Phi(\cdot, \eta) = 0$, where $\nu$ is the outward normal, so that locally the curves $X(\eta; \mu, x)$ do not flow through the boundary. As we have seen above, already simple problems require sharp spacial gradients so that we want to choose $\banach_n$ with resolutions matching the high fidelity model of the snapshots. In this section, we discuss the implications for the optimization of \eqref{eq:opt-ode}. 

It has been shown in \cite{Welper2017} that we can expect the map $\boldsymbol{\sigma}$ to be Lipschitz continuous. Although more sophisticated algorithms for such non-smooth optimization problems are available \cite{Kiwiel1985,BurkeLewisOverton2005}, we start with a simple gradient descent method:
\begin{equation}
  \boldsymbol{c}^{n+1} = \boldsymbol{c}^n - \alpha \boldsymbol{\sigma}'(\boldsymbol{c}^n).
  \label{eq:grad-descent}
\end{equation}
Unfortunately, this simple choice does not provide satisfactory results. For example, Figure \ref{fig:simple-1d} shows the convergence behaviour for a very simple one dimensional test case: The gradient descent optimizer reduces the error a little bit but then stalls way before any reasonable alignment has been achieved. Since the example has no singularities, the figure also shows the results for a coarse discretization of $X(\eta; \mu, x)$ by second order polynomials in $x$. As we see, this coarse resolution does not pose a severe obstacle to the optimizer.

\subsection{Optimization in Banach Spaces}

To better understand the issue of the higher resolution, we first go all the way to  infinite resolution and optimize \eqref{eq:opt-ode} over the full function space $\banach$. We solve the optimization problem $\min_{\Phi \in \banach} \sigma(\Phi)$ by a descent method so that 
\[
  \Phi^{n+1} = \Phi^n - \alpha d^n
\]
for some descent direction $d^n$. Usually, we choose the gradient $d^n = \sigma'(\Phi)$, but this time we have to be more careful: Since we optimize in a Banach space, $\sigma'(\Phi)$ is a Fr\'{e}chet derivative and thus an element of the dual space $\banach'$ of $\banach$. If $\banach = \real^n$, we can easily identify $\left(\real^n \right)' = \real^n$, but for a generic Banach space $\banach$ this is not always possible. This implies that a term $\Phi^n - \alpha \sigma'(\Phi)$ does not necessarily make sense. 

Thus, the descent direction $d^n$ depends on the Banach space $\banach$. A good choice of $\banach$ should depend e.g. on the existence of minimizers or their stability, which is beyond the scope of this paper. Nonetheless, one can easily gain some insight about the space $\banach'$. Formally, the variation of the error in the direction $v \in \banach$ is given by
\[
  \partial_v \sigma(\Phi) = \int_\Omega \sign \big[u(x,\mu) - u_m(x,\mu; \Phi) \big] \partial_v u_m(x, \mu; \Phi) \, dx
\]
where using the definition of the TSI, we have
\[
  \partial_v u_m(x, \mu; \Phi) = \sum_{\eta \in \param_m} \ell_\eta(\mu) D u(X(\eta; \mu, x), \eta) DX(\eta; \mu, x) v
\]
Note in particular that this derivative contains the derivative $D u(x, \eta)$ with respect to $x$. Since $u$ has jumps we may assume that $u$ is of bounded variation so that $Du$ is a measure. At the jumps this measure has a singular component, e.g. a Dirac delta in $d=1$ or generally a measure supported on a $d-1$ dimensional manifold. This implies that the dual space $\banach'$ must be rich enough to contain these singular measures, which makes an identification of $\banach$ and $\banach'$ difficult.

Of course the optimization in Banach spaces is well understood and a descent direction $d^n$ can be constructed along the following lines, see e.g. \cite{Ulbrich2009}. First observe that the value of the objective function after one step is given by
\[
  \sigma(\Phi^{n+1}) = \sigma(\Phi^n) - \alpha \dualp{\sigma'(\Phi^n), d^n} + o(\alpha)
\]
so that we obtain an error reduction for sufficiently small $\alpha$ if $\dualp{\sigma'(\Phi^n), d^n} > 0$. For example, we can choose the Riesz representation of the Fr\'{e}chet defined by
\begin{align}
  d^n & = \|\sigma(\Phi^n)\|_{\banach'} w, & 
  w & = \operatorname{argmax}_{v \in \banach} \frac{\dualp{\sigma'(\Phi^n), v}}{\|v\|_\banach},
  \label{eq:riesz-descent}
\end{align}
so that $\dualp{\sigma'(\Phi^n), d^n} = \|\sigma'(\Phi^n)\|_{\banach'}^2$ and we obtain
\[
  \sigma(\Phi^{n+1}) = \sigma(\Phi^n) - \alpha \|\sigma'(\Phi^n)\|_{\banach'}^2 + o(\alpha).
\]
Recall that in our case the dual space $\banach$ should contain singular measures supported on $d-1$ dimensional manifolds. Via the trace theorem, measures of such type are contained in $H^{-1}(\Omega)$, which is a particularly simple choice because it is a Hilbert space and the computation of the Riesz representer reduces to solving the Poisson equation. More precisely, since $\Phi(\cdot, \eta): \Omega \to \real^d$ is vector valued, we choose $\banach = \prod_{\eta \in \param_m} \{v \in H^1(\Omega)^d : \, \nu \cdot v = 0 \text{ on } \partial \Omega\}$, where $\nu$ is the outward unit normal vector and the ``slip'' boundary conditions ensure that locally the resulting curves $X(\eta)$ do not leave $\Omega$. The product is used to obtain one function for each $\Phi(\cdot, \eta)$ with $|\param_n| = n$ values for $\eta$. Thus, the Riesz representation \eqref{eq:riesz-descent} is given by $-\Delta^{-1} \sigma'(\Phi^n)$ and we obtain the optimization scheme
\begin{equation}
  \Phi^{n+1} = \Phi^n + \alpha \Delta^{-1} \sigma'(\Phi^n),
  \label{eq:opt-laplace}
\end{equation}
where $-\Delta^{-1}$ solves the vector valued Poisson equation with the given boundary conditions. This is discretized by a Galerkin method in the obvious way. We will refer to this method as \emph{Laplace smoothing} in the following.

As an alternative to inverting a Laplacian, we can carefully choose the dictionaries $\{\psi_i\}_{i \in \indexset}$ for the discretization, similar to e.g. wavelet methods for elliptic equations \cite{Dahmen2003,Cohen2003}. To this end, let us assume that the $\psi_i$ are a frame for $\banach'$, so that
\[
  A \| \ell \|_{\banach'}^2 \le \sum_{i \in \indexset} \left| \dualp{\ell, \psi_i} \right| \le B \|\ell\|_{\banach'}^2
\]
for all $\ell \in \banach'$ and constants $A,B \ge 0$. Then, there is a dual frame $\{\bar{\psi}_i\}_{i \in \indexset}$ such that
\[
  B^{-1} \|\Phi\|_{\banach}^2 \le \sum_{i \in \indexset} \left| \dualp{\Phi, \bar{\psi}_i} \right| \le A^{-1} \|\Phi\|_{\banach'}^2
\]
for all $\Phi \in \banach$ and
\[
  \Phi = \sum_{i \in \indexset} \dualp{\Phi, \bar{\psi}_i} \psi_i.
\]
Note that $\Phi \in \banach$ if and only if the frame coefficients $c_i = \dualp{\Phi, \bar{\psi}_i}$ are in $\ell_2(\real^\indexset)$ so that in the original discretization \eqref{eq:opt-ode-finite} we now optimize over $\boldsymbol{c} \in \ell(\real^\indexset)$ for which we easily can identify the space with its dual. Indeed, for the resulting iteration 
\begin{align}
  \boldsymbol{c}^{n+1} & = \boldsymbol{c}^n - \alpha \boldsymbol{d}^n, & \boldsymbol{d}^n_i = \partial_i \boldsymbol{\sigma}(\boldsymbol{c}) = \dualp{\sigma'\left(\sum_{i=1}^\infty c_i \psi_i \right), \psi_i}
  \label{eq:opt-frame}
\end{align}
we have the error reduction
\begin{align*}
  \boldsymbol{\sigma}(\boldsymbol{c}^{n+1}) 
  & = \boldsymbol{\sigma}(\boldsymbol{c}^n) - \alpha \sum_{i=1}^\infty \left|\dualp{\sigma'\left(\sum_{i=1}^\infty c_i \psi_i \right), \psi_i}\right|^2 + o(\alpha) \\ 
  & \le \boldsymbol{\sigma}(\boldsymbol{c}^n) - \alpha A \left\| \sigma'\left(\sum_{i=1}^\infty c_i \right) \right\|_{\banach'}^2 + o(\alpha)
\end{align*}
for $\alpha$ sufficiently small. 

For simplicity, in the numerical experiments, we do not explicitly construct a frame, but rather define the transform as $X(\eta; \mu, x) = x + \sum_{\ell} (X^\ell(\eta; \mu, x) - x)$, where $X^\ell(\dots)$ are curves generated by transform fields $\Phi^\ell$ with spacial resolution level $\ell$ and $X^\ell(\cdots) - x$ the updates from the respective levels. This is referred to as \emph{multilevel smoothing} in the following.

\section{Numerical Experiments}
\label{sec:experiments}

In Section \ref{sec:optimizers-1d}, we first choose a simple problem without singularity to see how the optimizers of Section \ref{sec:optimize} perform in comparison to the original method recalled in Section \ref{sec:tsi-review}. Then, in Section \ref{sec:collision-1d}, we consider a test with a singularity where the original low resolution transforms fail. In Section \ref{sec:curve-1d}, we compare different orders of the transforms and in Section \ref{sec:numerical-2d-example} we consider a $2d$ example.

In all examples, the finite element discretizations of the snapshots and transform fields are handled by the Deal.II library \cite{BangerthHartmannKanschat2007} and all TSI components are then put together in Tensorflow \cite{AbadiAgarwalBarhamEtAl2015}. This setup allows all gradients of the error to be automatically computed by the back-propagation algorithm in Tensorflow. 

\subsection{Testing the Optimizers}
\label{sec:optimizers-1d}

In this section, we first consider a very simple example in order to see how the optimizers of Section \ref{sec:optimize} applied to high resolution transforms of Section \ref{sec:ode} perform in comparison to a simple gradient descent method and low resolution transforms as described in \cite{Welper2017} or Section \ref{sec:tsi-review}. To this end, we consider
\begin{align*}
  u(x, \mu) & = \left\{ \begin{array}{ll}
    1 & x \le \mu \\ -1 & x > \mu
  \end{array} \right. &
  & \begin{aligned}
    \param_m = \param_n & = \{-0.2, 0.2\} \\
    \param_T & = \{0\}
  \end{aligned}& 
  \Omega & = [-1.5, 1.5].
\end{align*}
The snapshots and high resolution transforms are are discretized by piecewise linear functions on $32$ cells and the low resolution transforms are second order polynomials. We use five fixed point iterations \eqref{eq:fixed-point} and no scaling \eqref{eq:scaling}.

Since the objective function typically has a kink at the minimum, its value from gradient descent steps typically oscillates near the minimum. For all optimization methods in this section, we select a learning rate that shows slight oscillations at the end to make them comparable. This is not very accurate but sufficient to demonstrate the main point of this experiment: Despite the much higher number of degrees of freedom, the Laplace and multilevel smoothing result in a convergence behaviour that is comparable to the original gradient descent method applied to low resolution transforms in \cite{Welper2017}. 

The concrete results are reported in Figure \ref{fig:simple-1d}. First observe that a simple gradient descent method without smoothing for high resolution transforms stalls way before it reaches an acceptable minimum. Second, the value of the objective function never converges to zero. Instead, the TSI training error typically saturates at a level comparable to the discretization error of the snapshots. This effect will be observed for all numerical experiments in this paper and is discussed more thoroughly in \cite{Welper2017}. 

\pgfplotsset{axis function/.style={
  height=5cm, 
  yticklabel pos=right, 
  ticklabel style={font=\tiny}, 
  y label style={at={(axis description cs:0.2,0.5)}}
}}

\newcommand{\AddPlotTsiSimple}[3]{
      \addplot[mark=none, thick, #1] table[x=x, y=#2, col sep=comma] {pics/#3.csv};
}

\newcommand{\AddPlotErrors}[3]{
      \addplot[mark=none, thick, #1] table[x=n, y=#2, col sep=comma] {pics/#3.csv};
}

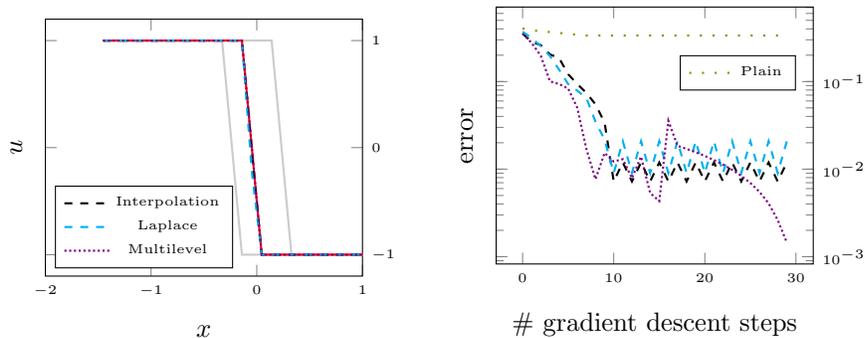
\begin{figure}[htb]

  \hfill
  \begin{tikzpicture}
    \begin{axis}[
      axis function, 
      xmin=-2, xmax=1,
      xlabel=$x$, 
      ylabel=$u$,
      legend style={legend pos=south west, font=\tiny}
    ] 

      \AddPlotTsiSimple{color=black!20}{snapshot 0}{simple}
      \AddPlotTsiSimple{color=black!20}{snapshot 1}{simple}
      \AddPlotTsiSimple{color=red}{exact}{simple}
      \AddPlotTsiSimple{color=black, dashed}{interpol}{simple}
      \AddPlotTsiSimple{color=cyan, dashed}{laplace}{simple}
      \AddPlotTsiSimple{color=violet, densely dotted}{multilevel}{simple}
      \legend{,,,Interpolation, Laplace,Multilevel}

    \end{axis}
  \end{tikzpicture}
  \hfill 
  \begin{tikzpicture}
    \begin{axis}[
      axis function, 
      xlabel=$\#$ gradient descent steps, 
      ylabel=error,
      ymode=log,
      legend style={at={(axis cs: 30,0.2)}, anchor=north east, font=\tiny}
    ] 

      \AddPlotErrors{color=black, dashed}{interpol}{simple_errors}
      \AddPlotErrors{color=olive, loosely dotted}{plain}{simple_errors}
      \AddPlotErrors{color=cyan, dashed}{laplace}{simple_errors}
      \AddPlotErrors{color=violet, densely dotted}{multilevel}{simple_errors}
      \legend{,Plain,,}

    \end{axis}
  \end{tikzpicture}

  \caption{Results for the $1d$ Example in Section \ref{sec:optimizers-1d}. Left: Snapshots (gray), exact solution (solid red) and reconstructions at $\mu = 0$: TSI with Laplace smoothing (cyan, dashed), multilevel smoothing (violet, dotted) and a TSI with low resolution transform \eqref{eq:transform-interpol} (black, dashed). Right: corresponding gradient descent errors with the same color coding. In addition the figure shows the errors for high resolution transforms without smoothing (olive dotted).}

  \label{fig:simple-1d}

\end{figure}

\subsection{Testing Transform Resolutions}
\label{sec:collision-1d}

Next, we test high versus low resolution transforms near a singularity. We choose the example
\begin{align*}
  u(x, \mu) & = \left\{ \begin{array}{rl} 
    \frac{x-l}{-(1-\mu) - l} & x \le -(1 - \mu) \\
    \frac{x-r}{ (1-\mu) - r} & x \ge \phantom{-(} 1 - \mu \phantom{)} \\
    0 & \text{else}
  \end{array}\right., &
  & \begin{aligned}
    \param_m & = \{0.6\} \\
    \param_n & = \{0.6\} \\
    \param_T & = \{0.9\}
  \end{aligned}& 
  & \begin{aligned}
  \Omega & = [-1.5, 1.5] \\
         & = [l, r]
  \end{aligned},
\end{align*}
with snapshots and high resolution transforms discretized by piecewise linear functions on $128$ cells. For the low resolution transforms we choose $x \to X(\eta; \mu, x)$ as third order polynomials. They are defined by four values: Two are used to ensure that the boundary positions do not move and two are optimized to track the two jumps. We only compare the low and high resolution transforms at the training parameter $\param_T$ because by \eqref{eq:transform-interpol} the low order transform is independent of the target $\mu$ for reconstructions from one single snapshot. For the high resolution transforms, we use three fixed point iterations in \eqref{eq:fixed-point} and scaling $s \in \{0.7^i: i=0 \dots 3\}$ in increasing size.

In Figure \ref{fig:1d-solutions} left, we see the TSI reconstructions at $\param_T$. We see that the high resolution transforms provide a good reconstruction, whereas the low resolution transforms do not. We have chosen the degree of the low resolution transforms sufficiently high so that in principle they can align the jumps. However, this requires a rather sharp gradient, which would create a big oscillation in the polynomial. As a result, the composition $x \to u(X(\eta; \mu, x), \eta)$ cannot be linear in the regions where is exact solution $x \to u(\mu, x)$ is. This can be seen by carefully looking at the plot in the linear regions. Thus, the optimizer must balance the fit in the linear regions with the aliment, which is not possible to do accurately. In contrast, by Figure \ref{fig:1d-solutions} right, the high resolution transforms are almost piecewise linear with kinks at the jump locations. Therefore they can simultaneously align the jumps and provide an accurate fit in the linear regions. The effect can also be clearly seen in the training errors in Figure \ref{fig:errors} left, where the low resolution transforms saturate much earlier than the high resolution transforms. 

\begin{figure}[htb]

  \hfill
  \begin{tikzpicture}
    \begin{axis}[
      axis function, 
      xlabel=$x$, 
      ylabel=$u$,
      legend style={legend pos=south east, font=\tiny}
    ] 

      \AddPlotTsiSimple{color=black!20}{snapshot 0}{collision}
      \AddPlotTsiSimple{color=red}{exact}{collision}
      \AddPlotTsiSimple{color=black, dashed}{interpol}{collision}
      \AddPlotTsiSimple{color=cyan, dashed}{laplace}{collision}
      \AddPlotTsiSimple{color=violet, densely dotted}{multilevel}{collision}
      \legend{,,Interpol,Laplace,Multilevel}

    \end{axis}
  \end{tikzpicture}
  \hfill 
  \begin{tikzpicture}
    \begin{axis}[
      axis function, 
      xlabel=$x$, 
      ylabel=$X(\eta)$,
      legend style={legend pos=south east, font=\tiny}
    ] 

      \AddPlotTsiSimple{color=black, dashed}{interpol xt 0}{collision}
      \AddPlotTsiSimple{color=cyan, dashed}{laplace xt 0}{collision}
      \AddPlotTsiSimple{color=violet, densely dotted}{multilevel xt 0}{collision}
      \legend{Interpol,Laplace,Multilevel}

      \newcommand{\JumpSourceZero}{-0.4}
      \newcommand{\JumpSourceOne}{0.4}
      \newcommand{\JumpTargetZero}{-0.1}
      \newcommand{\JumpTargetOne}{0.1}

      \newcommand{\PlotMu}{0.7}
      \newcommand{\PlotEtaOne}{0.55}
      \newcommand{\PlotEtaTwo}{0.85}
      \newcommand{\PlotJumpOne}[1]{#1-1}
      \newcommand{\PlotJumpTwo}[1]{1-#1}
      \draw[red, dotted] (axis cs: \JumpTargetZero, -1.5) -- (axis cs: \JumpTargetZero, 1.5);
      \draw[red, dotted] (axis cs: \JumpTargetOne, -1.5) -- (axis cs: \JumpTargetOne, 1.5);
      \draw[red, dotted] (axis cs: -1.5, \JumpSourceZero) -- (axis cs: 1.5, \JumpSourceZero);
      \draw[red, dotted] (axis cs: -1.5, \JumpSourceOne) -- (axis cs: 1.5, \JumpSourceOne);

    \end{axis}
    \hfill~
  \end{tikzpicture}

  \caption{Results for the Example in Section \ref{sec:collision-1d}. Left: Snapshots (gray), exact function at $\param_T$ (red) and TSI with low resolution (black dashed) and high resolution with Laplace smoothing (cyan, dashed) and multilevel smoothing (violet, dotted). Right: The corresponding transform with the same color coding. The red lines are the locations of the jumps of the training snapshot (vertically) and the target snapshots (horizontally).}

  \label{fig:1d-solutions}

\end{figure}

\begin{figure}[htb]

  \hfill
  \begin{tikzpicture}
    \begin{axis}[
      axis function, 
      xmin=0, xmax=300,
      xlabel=$\#$ gradient descent steps, 
      ylabel=error,
      ymax=3,
      ymode=log,
      legend style={legend pos=north east, font=\tiny}
    ] 

      \AddPlotErrors{color=black, dashed}{interpol}{collision_errors}
      \AddPlotErrors{color=cyan, dashed}{laplace}{collision_errors}
      \AddPlotErrors{color=violet, densely dotted}{multilevel}{collision_errors}
      \legend{Interpol,Laplace,Multilevel}

    \end{axis}
  \end{tikzpicture}
  \hfill
  \begin{tikzpicture}
    \begin{axis}[
      axis function, 
      xlabel=$\#$ gradient descent steps, 
      ylabel=error,
      ymode=log,
    ] 

      \AddPlotErrors{color=cyan, dashed}{laplace}{ellipse_errors}

    \end{axis}
  \end{tikzpicture}
  \hfill~

  \caption{Gradient descent errors for the examples in Sections \ref{sec:collision-1d} and \ref{sec:numerical-2d-example}. The left picture uses the color coding of Figure \ref{fig:1d-solutions}.}
  \label{fig:errors}

\end{figure}
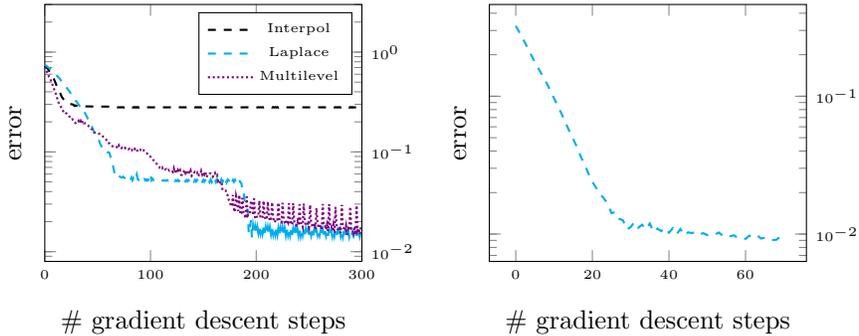

\subsection{Testing Transform Orders}
\label{sec:curve-1d}

In this section, we compare different polynomial orders in $\eta$ of the transforms. To this end, we choose the example
\begin{align*}
  u(x, \mu) & = \left\{ \begin{array}{rl} 
    \frac{x-l}{-p(\mu) - l} & x \le -p(\mu) \\
    \frac{x-r}{ p(\mu) - r} & x \ge \phantom{-} p(\mu) \\
    0 & \text{else}
  \end{array}\right., &
  & \begin{aligned}
    \param_m & = \{0.6\} \\
    \param_n & = \{0.6,0.7\} \\
    \param_T & = \{0.8, 0.96\}
  \end{aligned}& 
  & \begin{aligned}
  \Omega & = [-1.5, 1.5] \\
         & = [l, r]
  \end{aligned},
\end{align*}
where $p(\mu)$ is a second order polynomial given by $p(0.6) = 0.4$, $p(0.8) = 0.3$ and $p(1) = 0$. All the other parameters are set as in the example of Section \ref{sec:collision-1d}. Note in particular that the jump-set is now a parabola in parameter so that a first order reconstruction as in the last section can no longer accurately recover $u$. For a comparison, we choose Laplace smoothing once with first order transforms (in $\eta$) with $\param_n = \{0.6\}$ and once with second order with $\param_n = \{0.6,0.7\}$. To be fair, both are trained on the same training set $\param_T$ as given above.

The results for $\mu = 0.86$ are shown in Figure \ref{fig:curve}. In the function plot the second order reconstruction looks marginally better, but a look at the training errors reveals that they are better by a factor of about $6$. The final training error for the second order case is about twice as much as for the example of Section \ref{sec:collision-1d}. This is encouraging because the profiles are identical, only the dynamic in parameter is changed, and in the second order example we add the training error at two training parameters, whereas we only use one in the previous example.

In principle, at this place we should present some plots of convergence rates. However, these are difficult to achieve because of the error saturation. Even if we achieve high order reconstructions, the errors need to be matched by the discretizations of the snapshots in physical space. Since this is quite difficult because of the jumps, we omit a more thorough convergence analysis.

\begin{figure}[htb]

  \hfill
  \begin{tikzpicture}
    \begin{axis}[
      axis function, 
      xlabel=$x$, 
      ylabel=$u$,
      legend style={legend pos=south east, font=\tiny}
    ] 

      \AddPlotTsiSimple{color=black!20}{snapshot 0}{curve}
      \AddPlotTsiSimple{color=red}{exact}{curve}
      \AddPlotTsiSimple{color=black, dashed}{order 1}{curve}
      \AddPlotTsiSimple{color=cyan, dashed}{order 2}{curve}
      \legend{,,Order 1, Order 2}

    \end{axis}
  \end{tikzpicture}
  \hfill 
  \begin{tikzpicture}
    \begin{axis}[
      axis function, 
      xmin=0, xmax=300,
      xlabel=$\#$ gradient descent steps, 
      ylabel=error,
      ymode=log,
      legend style={legend pos=north east, font=\tiny}
    ] 

      \AddPlotErrors{color=black, dashed}{order 1}{curve_errors}
      \AddPlotErrors{color=cyan, dashed}{order 2}{curve_errors}
      \legend{Order 1, Order 2}

    \end{axis}
  \end{tikzpicture}

  \caption{Results for the Example in Section \ref{sec:curve-1d}. Left: Snapshots (gray), exact function at $\param_T$ (red) and TSI reconstruction with linear in $\eta$ (black dashed) and quadratic (cyan dashed) transforms. Right: The corresponding gradient descent errors.}

  \label{fig:curve}

\end{figure}
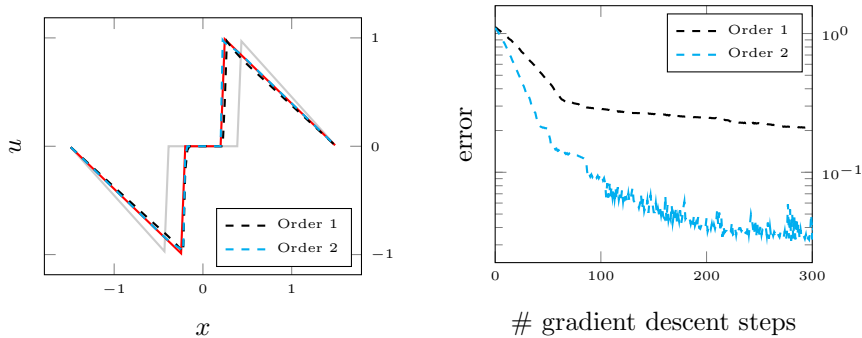

\subsection{$2d$ Example}
\label{sec:numerical-2d-example}

Finally, we consider one example in two spacial dimensions. We use the function
\begin{align*}
  u(x, y, \mu) & = \left\{ \begin{array}{rl} 
    1 & \left( \frac{r}{2\mu} \right)^2 + \left( \frac{s}{\mu} \right)^2 \le 1\\
    0 & \text{else}
  \end{array}\right., &
  \begin{pmatrix}
    r \\ s
  \end{pmatrix} & = \begin{pmatrix}
    \cos(1.5 \mu) & -\sin(1.5 \mu) \\
    \sin(1.5 \mu) &  \cos(1.5 \mu)
  \end{pmatrix}
\end{align*}
which is a rotated and scaled ellipse, with parameters
\begin{align*}
  & \begin{aligned}
    \param_m & = \{c(1), c(0.3)\} \\
    \param_n & = \{c(1)\} \\
    \param_T & = \{c(0.6), c(0)\}
  \end{aligned}& 
  c(\lambda) & = 0.2 \lambda + 0.05 (1-\lambda) & 
  \Omega & = [-1, 1]^2.
\end{align*}
The snapshots and high resolution transforms are discretized by piecewise bilinear functions on a rectangular gird with $64 \times 64$ cells. We use three fixed point iterations in \eqref{eq:fixed-point} and scaling $s \in \{0.7^i: i=0 \dots 2\}$ in increasing size.

Figure \ref{fig:2d} shows a solution plot for $\mu = c(0.25)$ together with some of the curves $\eta \to X(\eta; \mu, x)$ originating on the boundary of the correct ellipse. Figure \ref{fig:errors} right contains the corresponding training errors. We see that also in this $2d$ example, the method can achieve a good alignment with a moderate number of gradient descent steps.

As a final remark, note that for all experiments we use a fixed step size of a conservative magnitude. This is necessary because of the non-smooth nature of the objective function. Normally, near a minimum, the gradient becomes small so that the update between two gradient descent steps converges to zero. In our case however, we expect a kink at the minimum, similar to minimizing an absolute value $|x|$. Therefore, the gradient does not become small, which is compensated by a small step size. The effect can be seen in the convergence plots, where in the end the error oscillates around an equilibrium with amplitude depending on the step size. On the other hand, far away from the minimum, the conservative choice of the step size hinders a faster convergence.

\newcommand{\AddPlotTsiDimTwo}[1]{
   \addplot3[surf] table[x=x, y=y, z=#1, col sep=comma] {pics/ellipse.csv};
}
\newcommand{\AddPlotTsiCurves}[1]{
  \foreach \i in {0,1,...,8}{
    \addplot[mark=none, thick] table[x=#1 x\i, y=#1 y\i, col sep=comma] {pics/ellipse_xt.csv};
  \addplot[mark=x] table[x=initial x\i, y=initial y\i, col sep=comma] {pics/ellipse_xt.csv};
  }
}

\newcommand{\PlotTwoD}[1]{
  \begin{tikzpicture}[scale=0.5]
    \begin{axis}[
        xticklabels={,,},
        yticklabels={,,},
        view={0}{90},
      ]
      #1
    \end{axis}
  \end{tikzpicture}
}

\begin{figure}[htb]
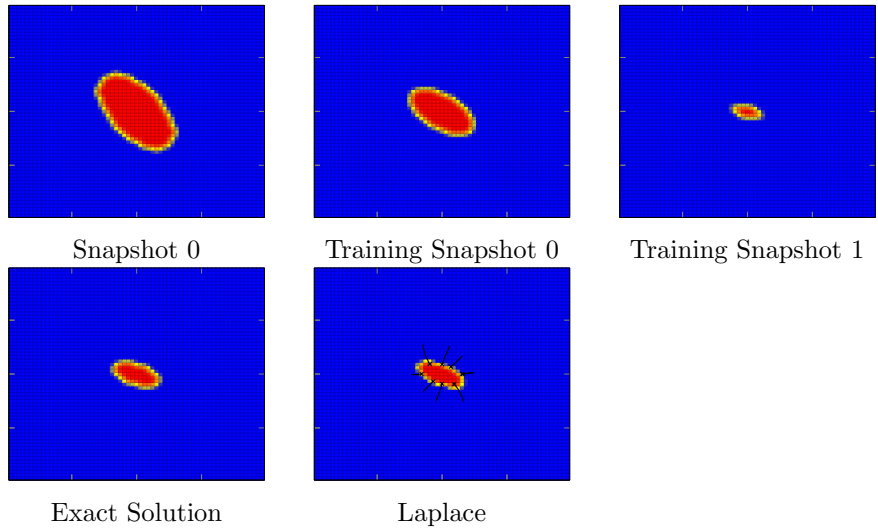


  \begin{tabular}{ccc}
    \PlotTwoD{\AddPlotTsiDimTwo{snapshot 0}} & 
    \PlotTwoD{\AddPlotTsiDimTwo{train 0}} & 
    \PlotTwoD{\AddPlotTsiDimTwo{train 1}} 
    \\
    Snapshot $0$ & Training Snapshot $0$ & Training Snapshot $1$
    \\
    \PlotTwoD{\AddPlotTsiDimTwo{exact}} &
    \PlotTwoD{\AddPlotTsiDimTwo{laplace}\AddPlotTsiCurves{laplace}} &
    \\
    Exact Solution & Laplace &
  \end{tabular}

  \caption{Results for numerical tests in Section \ref{sec:numerical-2d-example}. The top row contains the input data and the bottom row the exact solution and TSI reconstruction. The black lines in the last plot indicate several transforms $\eta \to X(\eta; \mu, x)$ with initial values $x$ at the respective black $\times$s.}

  \label{fig:2d}

\end{figure}

\begin{appendices}

\section{Proof of Proposition \ref{prop:stab}}
\label{appendix-proof-prop}

The following proof of Proposition \ref{prop:stab} is almost verbatim from \cite[Section 3]{Welper2017}. 

Let us first estimate $\|v \circ X_0 - v \circ X_1\|_{L_1(\Omega)}$ for $X_s(x) := X_s(\eta, \mu, x)$ for some fixed $\mu$ and $\eta$ and some function $v \in C^1(\Omega)$. Applying the fundamental theorem for line integrals, we obtain
\begin{equation*}
  (v \circ X_0)(x) - (v \circ X_1)(x) = \int_0^1 v'(X_s(x)) \dot{X}_s(x) \, \text{d} s.
\end{equation*}
Note that we can regard $|\dot{X}(s)|$ as a density and thus define a transition kernel $\tau(s,A) = \int_A |\dot{X}_s(x)| \, dx$. Then, with the pushforward $\kappa(s, A) =  (X_s)_* \tau(s, A) = \tau(s, X_s^{-1}(A))$ and the Lebesgue measure $\lambda_1$ on the real line, the definition of $S(\mu, \lambda)$ implies
\[
  (\lambda_1 \otimes \kappa)([0,1] \times A) = \int_0^1 \kappa(s, A) \, ds = \int_0^1 \int_{X_s^{-1}(A)} |\dot{X}_s(x)| \, dx \, ds \le S(\mu, \eta) |A|.
\]
Thus, using the Fubini theorem for transition kernels
we obtain 
\begin{multline}
  \| v \circ X_0 - v \circ X_1 \|_{L_1(\Omega)} \\
  \begin{aligned}
    & \le \int_\Omega \int_0^1 |v'(X_s(x))| |\dot{X}_s(x)| \, \text{d} s \, \text{d} x
    = \int_0^1 \int_\Omega |v'(X_s(x))| \tau(s, dx) \, \text{d} s \\
    & = \int_0^1 \int_\Omega |v'(y)| \kappa(s, dy) \, \text{d} s
    = \int_{[0,1] \times \Omega} |v'(y)| d (\lambda_1 \otimes \kappa)(s,y) \\
    & \le S(\mu, \eta) \int_\Omega |v'(y)|\, \text{d} y
    = S(\mu, \eta) \|v\|_{BV(\Omega)}.
  \end{aligned}
  \label{eq:stability-single}
\end{multline}
Next, we extend this estimate from functions $v \in C^1(\Omega)$ to functions $v \in BV(\Omega)$ by a density argument. To this end, note that for all $\epsilon > 0$ there is a $v_\epsilon \in C^1(\Omega)$ such that
\begin{align*}
  \|v - v_\epsilon\|_{L_1(\Omega)} & \le \epsilon & \|v_\epsilon'\|_{L_1(\Omega)} & \le \|v\|_{BV(\Omega)} + \epsilon.
\end{align*}
Thus, to apply a density argument, is suffices to bound $\|v \circ X_0 - v_\epsilon \circ X_0\|_{L_1(\Omega)}$ and $\|v \circ X_1 - v_\epsilon \circ X_1\|_{L_1(\Omega)}$. Since by assumption $(X_s)_* \lambda (A) \le B \lambda(A)$ for all measurable sets $A$, we conclude that 
\begin{align*}
  \|v \circ X_0 - v_\epsilon \circ X_0\|_{L_1(\Omega)}
  & = \int_\Omega |v(y) - v_\epsilon(y)| \, \text{d} (X_0)_* \lambda(y) \\
  & \le B \|v - v_\epsilon\|_{L_1(\Omega)}
\end{align*}
The bound for $\|v \circ X_1 - v_\epsilon \circ X_1\|_{L_1(\Omega)}$ follows analogously.

Finally, recalling that $X_s(x) = X_s(\eta; \mu, x)$, we apply the estimate \eqref{eq:stability-single} to the full TSI and obtain
\begin{multline*}
  \|u_m(\cdot, \mu; X_0) - u_m(\cdot, \mu, X_1)\|_{L_1(\Omega)} \\
  \begin{aligned}
    & \le \sum_{\eta \in \param_m} |\ell_\eta(\mu)| \| u(X_0(\eta; \mu, \cdot), \eta) - u(X_1(\eta; \mu, \cdot), \eta) \|_{L_1(\Omega)} \\
    & \le \Lambda_n \max_{\eta \in \param_m} \| u(X_0(\eta; \mu, \cdot), \eta) - u(X_1(\eta; \mu, \cdot), \eta) \|_{L_1(\Omega)} \\
    & \le \Lambda_n \max_{\eta \in \param_m} \big[ S(\mu, \eta) \|u(\cdot, \eta)\|_{BV(\Omega)} \big].
  \end{aligned}
\end{multline*}

\end{appendices}

\bibliographystyle{abbrv}
\bibliography{tsi-high-res}

\end{document}